\theoremstyle{plain} 
\numberwithin{equation}{section}
\newtheorem{theorem}{Theorem}[section]
\newtheorem{corollary}[theorem]{Corollary}
\newtheorem{lemma}[theorem]{Lemma}
\newtheorem{proposition}[theorem]{Proposition}
\theoremstyle{definition}
\newtheorem{remark}[theorem]{Remark}
\theoremstyle{definition}
\newtheorem{definition}[theorem]{Definition}
\newcommand{\appendicesB}{\par
\if@chapter@pp
\setcounter{chapter}{0}%
\setcounter{section}{0}%
\gdef\@chapapp{\appendixname}%
\gdef\thechapter{B\c@chapter}
\else
\setcounter{section}{0}%
\setcounter{subsection}{0}%
\gdef\thesection{B}
\fi
}
\newcommand{\appendicesA}{\par
\if@chapter@pp
\setcounter{chapter}{0}%
\setcounter{section}{0}%
\gdef\@chapapp{\appendixname}%
\gdef\thechapter{A\c@chapter}
\else
\setcounter{section}{0}%
\setcounter{subsection}{0}%
\gdef\thesection{A}
\fi
}
\def\@makechapterhead#1{      \null
     \begin{center} 
       APPENDIX \thechapter\\
       #1
      \end{center}
     \nobreak
}
\begin{document}

\title[Comparison of T1 conditions for multiparameter operators]{Comparison of T1 conditions for multiparameter operators}

\author{Ana Grau de la Herr\'an}
\address{DEPARTMENT OF MATHEMATICS AND STATISTICS, UNIVERSITY OF HELSINKI, FINLAND}
\email{ana.grau@helsinki.fi}

\begin{abstract} Journ\'e \cite{J} established the classical multi-parameter singular integral theory whose formulation was written in the language of vector-valued Calder\'on-Zygmund theory. More recently, Pott and Villarroya \cite{PV} formulated a new type of $T1$ theorem for product spaces where the vector-valued formulations were replaced by several mixed type conditions. Later on, Martikainen \cite{M} redefined the biparameter operators inspired in the work of Pott and Villarroya. Here we intend to show that for $L^2$ bounded $T$, the classes are equals although perhaps not in general. 
\end{abstract}

\maketitle

\tableofcontents

\section{Introduction \label{s1}}

Journ\'e proved in \cite{J} the $T(1)$ Theorem for Calder\'on-Zygmund operators on product spaces. In that paper, Journ\'e was able to formulate the statement of the theorem in a way that a priori resembles the classical one by using vector valued Calder\'on-Zygmund theory formulation. Once we analyse more closely this formulation,  a priori boundedness of some components of the operator is required, which differs from the classical setting. This variance comes from trying to overcome some challenges that are not encountered in the classical case as, for example, that the singularities of multiparameter operators lie not only at the origin (as is the case of standard Calder\'on-Zygmund kernels), but they spread over larger subspaces. Pott and Villarroya \cite{PV} modified the formulation so that no a priori boundedness is assumed in the operator.

The relationship between these two classes of operators defined from the different formulations was unclear. In this paper we prove that for $L^2$ bounded operators the two sets of conditions actually define the same class of operators.

The main result of the paper reads as follows:

\begin{theorem}

Let $T:\mathcal{C}_0^{\infty}(\mathbb{R}^n)\otimes\mathcal{C}_0^{\infty}(\mathbb{R}^m)\rightarrow\left[\mathcal{C}_0^{\infty}(\mathbb{R}^n)\otimes\mathcal{C}_0^{\infty}(\mathbb{R}^m)\right]'$ be a continuous linear mapping ($n+m=d$) that has the kernel representation
\begin{equation*}
Tf(x)=\int_{\mathbb{R}^d}K(x,y)f(y)dy.
\end{equation*}

If $T$ can be extended to a bounded operator $T:L^2\rightarrow L^2$ then it satifies the Journ\'e type conditions, i.e, $T$ is a bi-parameter $\delta$-SIO as defined in Definition \ref{deltasio} satisfying the weak boundedness property \eqref{classWBP} if and only if it satisfies the Pott-Villarroya type conditions, i.e., $T$ is an operator defined as in \eqref{defmixoperator} whose kernel satisfies \eqref{size}--\eqref{adapted1} and, additionally,  $T$ also satisfies \eqref{mixedwbp}--\eqref{diagbmo4} .
\end{theorem}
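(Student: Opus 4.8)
The plan is to prove the two implications separately. In both directions the key structural observation is that the hypothesis ``$T\colon L^2\to L^2$ is bounded'' decouples each of the two lists of conditions into (i) genuinely kernel-theoretic statements about $K$ on the various off-singularity regions, whose equivalence is essentially a translation between two bookkeeping conventions, and (ii) operator-level statements --- the weak boundedness properties and the ``diagonal'' $\mathrm{BMO}$ conditions --- which can be manufactured from the $L^2$ bound together with the relevant kernel estimates by a one-parameter Calder\'on--Zygmund / $T1$ argument carried out with one of the two parameters frozen.

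\emph{From Journ\'e-type to Pott--Villarroya-type.} Assume $T$ is an $L^2$-bounded bi-parameter $\delta$-SIO in the sense of Definition~\ref{deltasio} with the classical weak boundedness property \eqref{classWBP}. First I would check the representation \eqref{defmixoperator} together with the adapted size and smoothness conditions \eqref{size}--\eqref{adapted1}: these follow by reading the full and partial kernel estimates of the $\delta$-SIO against the product test functions used in the Pott--Villarroya formulation, and amount to bookkeeping. For the mixed weak boundedness property \eqref{mixedwbp} one tests $T$ on tensor products of $L^2$-normalized bumps, using the $L^2$ bound in the non-separated variable and the partial (or full) kernel estimate in the separated one. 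Finally, the mixed weak boundedness and diagonal $\mathrm{BMO}$ conditions up through \eqref{diagbmo4} are obtained by freezing one parameter: the resulting object is a one-parameter bilinear form whose off-diagonal kernel is the partial kernel of the $\delta$-SIO and which is $L^2$-bounded by restriction of $T$, so the standard implication ``an $L^2$-bounded Calder\'on--Zygmund operator has $T1,\ T^*1\in\mathrm{BMO}$'' applies; one runs this uniformly over the frozen data, in each of the two parameters, and for $T$ as well as for its partial adjoints.

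\emph{From Pott--Villarroya-type to Journ\'e-type.} This is the substantive direction. Assume $T$ is as in \eqref{defmixoperator}, bounded on $L^2$, with kernel satisfying \eqref{size}--\eqref{adapted1} and satisfying the conditions \eqref{mixedwbp}--\eqref{diagbmo4}. The full kernel representation and the full size and H\"older estimates required of a $\delta$-SIO are immediate from \eqref{defmixoperator} and \eqref{size}--\eqref{adapted1}, and the classical weak boundedness property \eqref{classWBP} follows from the mixed one together with the $L^2$ bound. The real content is the construction of the partial kernel representations. Fixing, say, the first parameter: for $f_1,g_1$ with disjoint supports in $\mathbb{R}^n$ one must produce a kernel $K_{f_2,g_2}(x_1,y_1)$ representing $(f_1,g_1)\mapsto\langle T(f_1\otimes f_2),g_1\otimes g_2\rangle$, with size and $\delta$-smoothness in $(x_1,y_1)$ controlled by $\|f_2\|_{L^2}\|g_2\|_{L^2}$ after the usual cube normalization, and with the extra product decay in $(x_2,y_2)$ when $f_2,g_2$ are themselves separated. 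The steps I would carry out are: (a) when $f_2,g_2$ have separated supports, read off $K_{f_2,g_2}$ and its mixed size/H\"older estimates directly from \eqref{size}--\eqref{adapted1}; (b) for general $f_2,g_2$, form the one-parameter operator $T^1_{f_2,g_2}$ determined by $\langle T^1_{f_2,g_2}h_1,k_1\rangle=\langle T(h_1\otimes f_2),k_1\otimes g_2\rangle$ on $\mathbb{R}^n$, which is $L^2(\mathbb{R}^n)$-bounded with norm $\lesssim\|f_2\|_{L^2}\|g_2\|_{L^2}$ by restricting the $L^2(\mathbb{R}^d)$ bound for $T$; (c) verify that $T^1_{f_2,g_2}$ enjoys weak boundedness (from \eqref{mixedwbp}) and that $T^1_{f_2,g_2}1$ and $(T^1_{f_2,g_2})^*1$ lie in $\mathrm{BMO}(\mathbb{R}^n)$ with the correct norms (from \eqref{mixedwbp}--\eqref{diagbmo4}); (d) conclude from the one-parameter $T1$ theorem, applied uniformly in $(f_2,g_2)$, that $T^1_{f_2,g_2}$ is a bona fide Calder\'on--Zygmund operator, which delivers $K_{f_2,g_2}$ with the desired estimates and hence the partial kernel representation. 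Exchanging the roles of the two parameters and repeating (a)--(d) completes the verification that $T$ is a $\delta$-SIO.

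\emph{Expected main obstacle.} The delicate part is (b)--(d) of the second implication. Since the adapted conditions \eqref{size}--\eqref{adapted1} control the \emph{full} kernel only on a region avoiding the diagonal in \emph{both} parameters, the partial kernel cannot be obtained by naively integrating $K$ against $f_2,g_2$ (there is a diagonal contribution in the frozen parameter); it has to be constructed from the $L^2$ bound and the testing conditions. Obtaining the \emph{sharp} dependence on the frozen data --- in particular the $\|f_2\|_{L^2}\|g_2\|_{L^2}$ scaling in the non-separated regime, which cannot come from kernel size alone --- forces a scale-by-scale argument in the frozen parameter that simultaneously uses the $L^2$ boundedness of $T$, the mixed weak boundedness property, and the diagonal $\mathrm{BMO}$ conditions, most naturally through a paraproduct / $\mathrm{BMO}$ decomposition together with a $T1$ argument that is uniform over the frozen functions. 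This uniformity, and the precise matching of constants between the two formulations, is where essentially all of the technical effort lies.
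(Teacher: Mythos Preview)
You correctly identify that Pott--Villarroya $\Rightarrow$ Journ\'e is the substantive direction, but your plan for that direction misidentifies what has to be built and where the one-parameter $T1$ theorem enters.

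In Journ\'e's definition, the object that carries the weight is the \emph{operator-valued} kernel $K_1(x_1,y_1)\in\delta CZ(\mathbb{R}^m)$ for each fixed $x_1\ne y_1$, with $\|K_1(x_1,y_1)\|_{\delta CZ}$ satisfying the standard estimates \eqref{classicalsize}--\eqref{classcancel2} in $(x_1,y_1)$. Your sentence ``the full size and H\"older estimates required of a $\delta$-SIO are immediate'' overlooks that $\|\cdot\|_{\delta CZ}$ contains the $L^2(\mathbb{R}^m)\to L^2(\mathbb{R}^m)$ operator norm, which is \emph{not} immediate from the scalar kernel bounds. The paper obtains it by applying the one-parameter $T1$ theorem (in the local $L^1$-testing form, Theorem~\ref{1auxT1}) \emph{to $K_1(x_1,y_1)$ as an operator on $\mathbb{R}^m$}: its scalar kernel $K((x_1,\cdot),(y_1,\cdot))$ is $\delta$-standard by \eqref{size}, \eqref{mixed3}, \eqref{mixed4}, and the testing conditions come precisely from the separated conditions \eqref{sep1}--\eqref{adapted1} via Lemma~\ref{genadapted1}. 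The differences $K_1(x_1,y_1)-K_1(x_1',y_1)$, etc., are handled the same way using \eqref{sep2}, \eqref{sep3}.

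Your steps (b)--(d) focus instead on the operators $T^1_{f_2,g_2}$ on $\mathbb{R}^n$ for general $f_2,g_2$, which is the wrong parametrization: Journ\'e's definition demands $L^2(\mathbb{R}^m)$-boundedness of $K_1(x_1,y_1)$ uniformly in $(x_1,y_1)$, not $\delta CZ$-membership of $T^1_{f_2,g_2}$ uniformly in $(f_2,g_2)$. Moreover, (c)--(d) is circular: you already have $L^2$-boundedness in (b), and $T1$ takes kernel bounds plus testing conditions as \emph{input} to produce $L^2$-boundedness --- it does not output kernel bounds. The diagonal BMO conditions \eqref{diagbmo1}--\eqref{diagbmo4} you plan to use are in fact not needed at all in the PV $\Rightarrow$ Journ\'e direction (the paper remarks they follow trivially from the $L^2$ bound); the work is done entirely by the separated kernel conditions \eqref{sep1}--\eqref{adapted1}.

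Once $\|K_i(\cdot,\cdot)\|_{2\to 2}$ satisfies standard estimates, the classical WBP is short: for the $|\cdot|_\delta$ part of $\|\langle\eta_t^{x_1},T^1\xi_t^{x_1}\rangle\|_{\delta CZ}$ one writes the kernel as $\langle\eta_t^{x_1},K_2(x_2,y_2)\xi_t^{x_1}\rangle$ and uses the just-established bounds on $\|K_2(x_2,y_2)\|_{2\to 2}$ and its differences; for the $L^2$ part one again invokes Theorem~\ref{1auxT1}, with the testing conditions now supplied directly by the $L^2$-boundedness of $T$. The mixed WBP \eqref{mixedwbp} plays no role here either. So the ``expected main obstacle'' you describe --- a uniform scale-by-scale paraproduct/$T1$ argument in the frozen parameter with sharp dependence on $\|f_2\|_2\|g_2\|_2$ --- does not arise; the right move is a single application of $T1$ to $K_1(x_1,y_1)$ on the \emph{unfrozen} variable, for which the PV separated conditions are tailor-made.
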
 

One of the reasons that leaded to compare both formulations is that Journ\'e proved that for $L^2$ bounded operators $T$ that satisfy the Journ\'e type conditions imply that $T1, \ T^*1\in BMO$, while this was previously not known for the Pott-Villarroya type conditions. Thus, for the new product $T1$ theorems, the \newline $T1, \ T^*1\in BMO$ conditions were just sufficient, but they were not known to be necessary. Moreover, with the equivalence of the Journ\'e type and Pott-Villarroya type conditions, we indirectly find that $T1, \ T^*1\in BMO$ also for the $L^2$ bounded operators $T$ that satisfy the Pott-Villarroya type conditions.

We want to stress out that even when the two sets of conditions are now found to be equivalent, the new Pott-Villarroya type of conditions are still useful, since it may be easier to verify them in concrete cases than the vector-valued Journ\'e type of conditions.

The layout of the paper is as follows. We are going to state the classical result and conditions as in \cite{J} in Section \ref{s2} while we will introduce the new mixed type conditions as they were defined in  \cite{M} in Section \ref{s3}. Then we will proceed to prove the relation of such conditions in Sections \ref{s4} and \ref{s5}.

\textbf{Acknowledgements}.- We would like to thank prof. Tuomas Hyt\"onen for suggesting this problem as well as multiple useful conversations that granted  important insight for the development of the paper. The author was supported by the European Union through the ERC Starting Grant "Analytic-probabilistic methods for borderline singular integrals". 
\section{Classical formulation \label{s2}}

In this section we are going to introduce the classical formulation as stated in Journ\'e's original paper.

Let $\Omega=\mathbb{R}^d\times\mathbb{R}^d\setminus\Delta$, where $\Delta=\{(x,y), \ x=y\}$ and let $\delta\in(0,1)$.

\begin{definition}\label{bdeltastandard}
Let $K$ be a continuous function defined on $\Omega$ and taking its values in a Banach space $B$. The function $K$ is a \textbf{ $B-\delta$-standard kernel} if the following are satisfied, for some constant $C>0$.

For all $(x,y)\in\Omega$,
\begin{equation}\label{classicalsize}
|K(x,y)|_B\leq\frac{C}{|x-y|^d}.
\end{equation}

For all $(x,y)\in\Omega$, and $x'\in\mathbb{R}^d$ such that $|x-x'|<\frac{|x-y|}{2}$,

\begin{equation}\label{classcancel1}
|K(x,y)-K(x',y)|_B\leq C\frac{|x-x'|^{\delta}}{|x-y|^{d+\delta}}
\end{equation}
and
\begin{equation}\label{classcancel2}
|K(y,x)-K(y,x')|_B\leq C\frac{|x-x'|^{\delta}}{|x-y|^{d+\delta}}.
\end{equation}
\end{definition}

The smallest constant C for which \eqref{classicalsize}, \eqref{classcancel1} and \eqref{classcancel2} hold is denoted by $|K|_{\delta, B}$. If the Banach space is the complex plane $\mathbb{C}$ we will omit the subscript $B$ for simplicity.

\begin{definition}
Let $T:\mathcal{C}_0^{\infty}(\mathbb{R}^d)\rightarrow\left[\mathcal{C}^{\infty}_0(\mathbb{R}^d)\right]'$ be a continuous linear mapping. $T$ is a singular integral operator (SIO) if, for some, $\delta\in(0,1)$, there exists a $\mathbb{C}$-$\delta$-standard kernel $K$ such that for all functions $f,g\in\mathcal{C}^{\infty}_0(\mathbb{R}^d)$ having disjoints supports
\begin{equation*}
<g,Tf>=\iint g(x)K(x,y)f(y)dydx.
\end{equation*}

We shall say that $T$ is a $\delta$-\textbf{SIO}.

\end{definition}

\begin{definition}
Let $T$ be a $\delta$-SIO and $K$ its kernel. We say that $T$ is a \textbf{$\delta$- Calder\'on-Zygmund operator} ($\delta$-CZO) if it extends boundedly from $L^2$ to itself. We also define the norm $\Vert\cdot\Vert_{\delta CZ}$ by
\begin{equation}\label{CZnorm}
\Vert T\Vert_{\delta CZ}=\Vert T\Vert_{2\rightarrow 2}+\vert K\vert_{\delta}
\end{equation}
\end{definition}

Note that the defined norm makes the set of $\delta$-CZO's a Banach space which we denote by $\delta CZ$.

\begin{remark}
To avoid excessive complication on notation we shall write $\vert T\vert_{\delta}=\vert K\vert_{\delta}$ where by $K$ we mean the kernel of $T$.
\end{remark}

\begin{definition}\label{deltasio}\cite{J}
Let $T:\mathcal{C}^{\infty}_0(\mathbb{R}^n)\otimes\mathcal{C}_0^{\infty}(\mathbb{R}^m)\rightarrow\left[\mathcal{C}_0^{\infty}(\mathbb{R}^n)\otimes\mathcal{C}_0^{\infty}(\mathbb{R}^m)\right]'$ be a continuous linear mapping. It is a \textbf{bi-parameter $\delta$-SIO on $\mathbb{R}^n\times\mathbb{R}^m$} if there exists a pair $(K_1,K_2)$ of $\delta CZ$-$\delta$-standard kernels so that, for all $ f_1,g_1\in\mathcal{C}_0^{\infty}(\mathbb{R}^n)$ and $f_2,g_2\in\mathcal{C}_0^{\infty}(\mathbb{R}^m)$, with $supp f_i\cap supp g_i=\emptyset$ ($i=1,2$),  
\begin{equation}\label{defK1}
\langle g_1\otimes g_2, Tf_1\otimes f_2\rangle=\iint g_1(x_1)\langle g_2,K_1(x_1,y_1)f_2\rangle f_1(y_1)dx_1dy_1,
\end{equation}
\begin{equation}\label{defK2}
\langle g_1\otimes g_2, Tf_1\otimes f_2\rangle=\iint g_2(x_2)\langle g_1, K_2(x_2,y_2)f_1\rangle f_2(y_2)dx_2dy_2.
\end{equation}

Let $\tilde{T}$ be defined by 
\begin{equation}
\langle g\otimes k, \tilde{T}f\otimes h\rangle=\langle f\otimes k, Tg\otimes h\rangle.
\end{equation}

It is readly seen that $\tilde{T}$ is a bi-parameter $\delta$-SIO if $T$ is. Its kernels $\tilde{K}_1$ and $\tilde{K}_2$ will be given by $\tilde{K}_1(x,y)=K_1(y,x)$ and $\tilde{K}_2(x,y)=\left[K_2(x,y)\right]^*.$
\end{definition}

Furthermore, let us introduce some notation for simplicity purposes. We define the operator $\langle g_1,T^1f_1\rangle:\mathcal{C}_0^{\infty}\rightarrow\left[\mathcal{C}_0^{\infty}(\mathbb{R})\right]'$ by
\begin{equation}
\langle g_2,\langle g_1, T^1f_1\rangle f_2\rangle=\langle g_1\otimes g_2, Tf_1\otimes f_2\rangle.
\end{equation}

It is easy to check that $\langle g_1, T^1f_1\rangle$ is a $\delta$-SIO on $\mathbb{R}$ with kernel
\begin{equation}\label{intermediatekernel}
K^1_{f_1,g_1}(x_2,y_2):=\langle g_1,T^1f_1\rangle(x_2,y_2)=\langle g_1,K_2(x_2,y_2)f_1\rangle.
\end{equation} 

One defines $K^2_{f_2,g_2}:=\langle g_2,T^2f_2\rangle$ in a similar manner.

\begin{definition}\label{classWBP}
Let $T$ be a bi-paramenter $\delta-SIO$ on $\mathbb{R}^d\times\mathbb{R}^d$. We say it has the bi-parameter \textbf{weak boundedness property} (WBP) in the classical sense if for any bounded subset $\mathcal{B}$ of $\mathcal{C}_0^{\infty}(\mathbb{R}^d)$ there exists a positive constant C (depending in the bounded subset) such that for any pair $(\eta,\xi)\in\mathcal{B}\times\mathcal{B}$, any $x_i\in\mathbb{R}^d$, $t>0$ and $i\in\{1,2\}$,

\begin{equation}\label{eqclassWBP}
\Vert\langle \eta_t^{x_i},T^i\xi^{x_i}_t\rangle\Vert_{\delta CZ}\leq C_{\mathcal{B}} t^{-d_i}
\end{equation}
\noindent where $\eta_t^{x_i}(z_i)=\frac{1}{t^{d_i}}\eta\left(\frac{z_i-x_i}{t}\right)$ ($\xi_t^{x_i}$ defined similarly),  $d_1=n$, $d_2=m$ and $T^i$ defined as above.
\end{definition}


\section{Mixed type conditions formulation \label{s3}}

In this section we are going to introduce the mixed type conditions formulation introduced by Pott and Villarroya \cite{PV} as reformulated by Martikainen \cite{M}.

\begin{definition}
We say that a function $u_V$ is \textbf{V-adapted with zero mean} if it satisfies $supp(u_V)\subset V, \ |u_V|\leq 1$ and $\int u_V=0$.
\end{definition}
\begin{definition}
Let $T:\mathcal{C}_0^{\infty}(\mathbb{R}^n)\otimes\mathcal{C}_0^{\infty}(\mathbb{R}^m)\rightarrow\left[\mathcal{C}_0^{\infty}(\mathbb{R}^n)\otimes\mathcal{C}_0^{\infty}(\mathbb{R}^m)\right]'$ be a continuous linear mapping $(n+m=d)$. Let $f=f_1\otimes f_2$ and $g=g_1\otimes g_2$ with $f_1,g_1:\mathbb{R}^n\rightarrow\mathbb{C}$, $f_2,g_2:\mathbb{R}^m\rightarrow\mathbb{C}$ satisfying $supp f_i\cap supp g_i=\emptyset$ for $i\in\{1,2\}$. We denote $f=f_1\otimes f_2$ (meaning $f(x)=f_1(x_1)\cdot f_2(x_2)$ for $x=(x_1,x_2)$) and $g=g_1\otimes g_2$.

We say that $T$ has a \textbf{Calder\'on-Zygmund structure} if it has the kernel representation
\begin{equation}\label{defmixoperator}
<Tf,g>=\int_{\mathbb{R}^d}\int_{\mathbb{R}^d}K(x,y)f(y)g(x)dxdy
\end{equation}
\noindent where the kernel $K:(\mathbb{R}^d\times\mathbb{R}^d)\setminus\{(x,y)\in\mathbb{R}^d\times\mathbb{R}^d \ : \ x_1=y_1 \ or \ x_2=y_2\}\rightarrow\mathbb{C}$ is assumed to satisfy the following conditions:
\begin{itemize}
\item \textbf{Size condition} \begin{equation}\label{size}
|K(x,y)|\leq C\frac{1}{|x_1-y_1|^n}\frac{1}{|x_2-y_2|^m}
\end{equation}

Aditionally, when $|x_i-x_i'|\leq|x_i-y_i|/2$ and $|y_i-y_i'|\leq |x_i-y_i|/2$ $i=1,2$

\item \textbf{H\"older condition}\begin{equation}\label{holder1}
|K(x,y)-K(x,(y_1,y_2'))-K(x,(y_1',y_2))+K(x,y')|\leq C\frac{|y_1-y_1'|^{\delta}}{|x_1-y_1|^{n+\delta}}\frac{|y_2-y_2'|^{\delta}}{|x_2-y_2|^{m+\delta}}
\end{equation}

\begin{equation}\label{holder2}
|K(x,y)-K((x_1,x_2'),y)-K((x_1',x_2),y)+K(x',y)|\leq C\frac{|x_1-x_1'|^{\delta}}{|x_1-y_1|^{n+\delta}}\frac{|x_2-x_2'|^{\delta}}{|x_2-y_2|^{m+\delta}}
\end{equation}

\begin{align}\label{holder3}
|K(x,y)-K((x_1,x_2')y)-K(x,(y_1',y_2))+ & K((x_1,x_2'),(y_1',y_2))|\\
&\leq C\frac{|y_1-y_1'|^{\delta}}{|x_1-y_1|^{n+\delta}}\frac{|x_2-x_2'|^{\delta}}{|x_2-y_2|^{m+\delta}}\notag
\end{align}

\begin{align}\label{holder4}
|K(x,y)-K(x,(y_1,y_2'))-K((x_1',x_2),y)+ & K((x_1',x_2),(y_1,y_2'))|\\
&\leq C\frac{|x_1-x_1'|^{\delta}}{|x_1-y_1|^{n+\delta}}\frac{|y_2-y_2'|^{\delta}}{|x_2-y_2|^{m+\delta}}\notag
\end{align}

\item \textbf{Mixed H\"older and size conditions} \begin{equation}\label{mixed1}
|K(x,y)-K((x_1',x_2),y)|\leq C\frac{|x_1-x_1'|^{\delta}}{|x_1-y_1|^{n+\delta}}\frac{1}{|x_2-y_2|^{m}}
\end{equation}

\begin{equation}\label{mixed2}
|K(x,y)-K(x,(y_1',y_2))|\leq C \frac{|y_1-y_1'|^{\delta}}{|x_1-y_1|^{n+\delta}}\frac{1}{|x_2-y_2|^{m}}
\end{equation}

\begin{equation}\label{mixed3}
|K(x,y)-K((x_1,x_2'),y)|\leq C\frac{1}{|x_1-y_1|^{n}}\frac{|x_2-x_2'|^{\delta}}{|x_2-y_2|^{m+\delta}}
\end{equation}

\begin{equation}\label{mixed4}
|K(x,y)-K(x,(y_1,y_2'))|\leq C \frac{1}{|x_1-y_1|^{n}}\frac{|y_2-y_2'|^{\delta}}{|x_2-y_2|^{m+\delta}}
\end{equation}

\item \textbf{Separated H\"older and size conditions}
\begin{equation}\label{sep1}
|K^j_{f_j,g_j}(x_i,y_i)|\leq C(f_j,g_j)\frac{1}{|x_i-y_i|^{d_i}}
\end{equation}

\begin{equation}\label{sep2}
|K^j_{f_j,g_j}(x_i,y_i)-K^j_{f_j,g_j}(x_i',y_i)|\leq C(f_j,g_j)\frac{|x_i-x_i'|^{\delta}}{|x_i-y_i|^{d_i+\delta}}
\end{equation}

\begin{equation}\label{sep3}
|K^j_{f_j,g_j}(x_i,y_i)-K^j_{f_j,g_j}(x_i,y_i')|\leq C(f_j,g_j)\frac{|y_i-y_i'|^{\delta}}{|x_i-y_i|^{d_i+\delta}}
\end{equation}

\noindent where $i=1,2$ , $j=1,2$, $d_1=n$, $d_2=m$. Morever for all cubes $V\in\mathbb{R}^{d_j}$
\begin{equation}\label{adapted1}
C(\chi_V,\chi_V)+C(\chi_V,u_V)+C(u_V,\chi_V)\leq C|V|
\end{equation}
\noindent whenever $u_V$ is a V-adapted with zero mean.
\end{itemize}

Here $K^j_{f_j,g_j}$ is defined as in \eqref{intermediatekernel}.
\end{definition}

\begin{lemma}\label{genadapted1}
Let $T$ be an operator defined as in \eqref{defmixoperator} whose kernel satisfies conditions \eqref{sep1}--\eqref{adapted1} then for all cubes $V\in\mathbb{R}^{d_j}$
\begin{equation}
C(\chi_V,g_V)+C(g_V,\chi_V)\leq C \ max(1,\Vert g_V\Vert_{\infty}) \ |V|
\end{equation}
\noindent whenever $g_V\in L^{\infty}(V)$, $d_1=n, \ d_2=m$.
\end{lemma}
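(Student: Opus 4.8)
The plan is to reduce a general bounded $g_V$ to the two cases already covered by \eqref{adapted1} --- namely $g_V=\chi_V$ and $g_V$ a $V$-adapted function with zero mean --- by splitting off the mean of $g_V$ and rescaling the remainder. The structural fact that makes this work is that the intermediate kernel $K^j_{f_j,g_j}$ of \eqref{intermediatekernel} is \emph{linear} in each of $f_j$ and $g_j$: off the relevant diagonal one has $K^j_{f_j,\,g_j+g_j'}=K^j_{f_j,g_j}+K^j_{f_j,g_j'}$ and $K^j_{f_j,\lambda g_j}=\lambda K^j_{f_j,g_j}$, and likewise in the first slot, since $T$ is linear in its argument and the pairings are linear. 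Consequently the best constant $C(f_j,g_j)$ for which \eqref{sep1}--\eqref{sep3} hold is subadditive and absolutely homogeneous in each of its two slots: each of the three pointwise estimates defining $C(f_j,\cdot)$ is stable under the triangle inequality and under scaling of $K^j$, so the infimum of admissible constants is too. In particular $C(\chi_V,0)=C(0,\chi_V)=0$, which settles the degenerate case $\|g_V\|_{\infty}=0$.

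Assume then $M:=\|g_V\|_{\infty}>0$, set $a:=\frac{1}{|V|}\int_V g_V$ so that $|a|\le M$, and decompose
\begin{equation*}
g_V=a\,\chi_V+u,\qquad u:=g_V-a\,\chi_V ,
\end{equation*}
where $\supp u\subset V$, $\int u=0$ and $\|u\|_{\infty}\le\|g_V\|_{\infty}+|a|\le 2M$. Then $\tilde u:=u/(2M)$ is a $V$-adapted function with zero mean ($\supp\tilde u\subset V$, $|\tilde u|\le 1$, $\int\tilde u=0$). Using subadditivity and homogeneity of $C(\chi_V,\cdot)$,
\begin{equation*}
C(\chi_V,g_V)\le|a|\,C(\chi_V,\chi_V)+2M\,C(\chi_V,\tilde u),
\end{equation*}
and \eqref{adapted1} bounds both $C(\chi_V,\chi_V)$ and $C(\chi_V,\tilde u)$ by $C|V|$; hence $C(\chi_V,g_V)\le(|a|+2M)\,C|V|\le 3M\,C|V|$. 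Running the same splitting in the first slot and using the $C(u_V,\chi_V)$ term of \eqref{adapted1} gives $C(g_V,\chi_V)\le 3M\,C|V|$, and adding the two estimates yields
\begin{equation*}
C(\chi_V,g_V)+C(g_V,\chi_V)\le 6M\,C|V|\le 6C\,\max(1,\|g_V\|_{\infty})\,|V| ,
\end{equation*}
since $M\le\max(1,M)$. This is the assertion.

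The computation itself is bookkeeping; the one point that genuinely needs care is the claim in the first paragraph that ``the smallest admissible constant in \eqref{sep1}--\eqref{sep3}'' is subadditive and homogeneous in each argument, i.e. that passing to the infimum over admissible constants is compatible with the pointwise triangle inequality for the bilinear kernel $K^j_{f_j,g_j}$. Once that is recorded, peeling off the constant part $a\chi_V$ and normalizing the zero-mean remainder $u$ so that it becomes $V$-adapted is exactly what puts us in the scope of \eqref{adapted1}. Note that no use is made of the size, H\"older, or mixed conditions \eqref{size}--\eqref{mixed4} beyond the fact that they guarantee $K^j_{f_j,g_j}$ is a genuine kernel of the type appearing in \eqref{sep1}--\eqref{sep3}.
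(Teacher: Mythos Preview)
Your proof is correct and follows essentially the same approach as the paper: decompose $g_V$ into its mean $a\chi_V$ plus a zero-mean remainder, rescale the remainder by $1/(2\|g_V\|_\infty)$ so that it becomes $V$-adapted, and then apply subadditivity/homogeneity of $C(\cdot,\cdot)$ together with \eqref{adapted1}. The paper's version is terser (it just says ``by linearity''), while you spell out the subadditivity and homogeneity of the optimal constant and the degenerate case $\|g_V\|_\infty=0$, but the argument is the same.
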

\begin{proof}

Let $g_V\in L^{\infty}(V)$ and rewrite it as follows

$$g_V=\left(g_V-\left(\fint_Vg_V\right)\chi_V\right)+\left(\fint_Vg_V\right)\chi_V=g_V^1+g_V^2$$

It is trivial to check that $\tfrac{1}{2\Vert g_V\Vert_{\infty}} g_V^1$ is V-adapted with zero mean and $g_V^2$ is a constant between 0 and $\Vert g_V\Vert_{\infty}$ multiplying the characteristic function restricted to V so by linearity and \eqref{adapted1}

\begin{align*}
C(\chi_V,g_V)&\leq\left(2\Vert g_V\Vert_{\infty} C(\chi_V,\tfrac{1}{2\Vert g_V\Vert_{\infty}}g_V^1)+\Vert g_V\Vert_{\infty} C(\chi_V,\chi_V)\right)\\
&\leq C \ max(1,\Vert g_V\Vert_{\infty}) \ |V|
\end{align*}

By symmetry we get the $C(g_V,\chi_V)\leq C \ max(1,\Vert g_V\Vert_{\infty}) \ |V|$.
\end{proof}
\begin{definition}
We say that $T$ satisfies the \textbf{weak boundedness property} in the mixed type sense if for every $Q\subset\mathbb{R}^n$ and $V\subset\mathbb{R}^m$

\begin{equation}\label{mixedwbp}
\langle T(\chi_Q\otimes\chi_V),\chi_Q\otimes\chi_V\rangle|\leq C|Q| \ |V|
\end{equation}
\end{definition}

To avoid confusion with the WBP in the classical sense defined in \eqref{classWBP} we are going to refer to \eqref{mixedwbp} as \textbf{mixed WBP}.

\begin{definition}
We say that $T$ satisfies \textbf{diagonal BMO conditions} if for every cube $Q\subset\mathbb{R}^n$ and $V\in\mathbb{R}^m$ and for every zero-mean functions $a_Q, b_V$ wich are $Q$ and $V$ adapted respectively:
\begin{equation}\label{diagbmo1}
|\langle T(a_Q\otimes\chi_V),\chi_Q\otimes\chi_V\rangle|\leq C|Q| \ |V|
\end{equation}

\begin{equation}\label{diagbmo2}
|\langle T(\chi_Q\otimes\chi_V),a_Q\otimes\chi_V\rangle|\leq C |Q| \ |V|
\end{equation}

\begin{equation}\label{diagbmo3}
|\langle T(\chi_Q\otimes b_V), \chi_Q\otimes\chi_V\rangle|\leq C|Q| \ |V|
\end{equation}

\begin{equation}\label{diagbmo4}
|\langle T(\chi_Q\otimes\chi_V),\chi_Q\otimes b_V\rangle|\leq C |Q| \ |V|
\end{equation}

\end{definition}
\section{Mixed type conditions imply classical conditions \label{s4}}

To prove that an operator $T$ that satisfies the mixed type conditions introduced in Section \ref{s3} is a bi-parameter $\delta$-SIO on $\mathbb{R}^d\times\mathbb{R}^d$ as defined in Section \ref{s2} we first need to find a pair of $\delta CZ$-$\delta$-standard kernels satisfying conditions \eqref{defK1} and \eqref{defK2}. Afterwards we are going to prove that if such $\delta$-SIO defines an $L^2$ bounded operator, it also satisfies the bi-parameter WBP \eqref{classWBP} in the classical sense. First of all we are going to recall the following version of the uniparametric $T(1)$ Theorem.





\begin{theorem}\label{1auxT1}\cite{Ho}
Let T be a $\delta$-SIO on $\mathbb{R}^d$ and K its kernel. If there exists a constant $A>0$ such that for every cube $V\subset\mathbb{R}^d$
\begin{equation}\label{L1}
\Vert T\chi_V\Vert_{L^1(V)}\leq A|V|
\end{equation}

and

\begin{equation}\label{L1adj}
\Vert T^*\chi_V\Vert_{L^1(V)}\leq A|V|
\end{equation}

Then $T$ is a bounded operator on $L^2$ such that $\Vert T\Vert_{2\rightarrow 2}\leq C_{\delta,d}\cdot (A+|K|_{\delta})$.

\end{theorem}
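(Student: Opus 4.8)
\emph{Strategy.} The plan is to deduce the statement from the classical quantitative $T(1)$ theorem of David--Journ\'e: a $\delta$-SIO on $\mathbb{R}^d$ with the weak boundedness property (constant $W_T$) and with $T1,T^*1\in BMO$ is bounded on $L^2$, with $\|T\|_{2\to 2}\le C_{\delta,d}\bigl(W_T+\|T1\|_{BMO}+\|T^*1\|_{BMO}+|K|_\delta\bigr)$. Everything then reduces to showing that \eqref{L1}--\eqref{L1adj}, together with the $\delta$-standard kernel bounds \eqref{classicalsize}--\eqref{classcancel2}, force $W_T+\|T1\|_{BMO}+\|T^*1\|_{BMO}\lesssim_d A+|K|_\delta$. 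One preliminary point: in the absence of an a priori $L^2$ bound, $T\chi_V$ is understood as the $L^1(V)$ function supplied by the hypothesis, which on $\mathbb{R}^d\setminus\overline V$ agrees with the absolutely convergent integral $\int_V K(\cdot,y)\,dy$; and $T1$ is the element of $BMO$ (i.e.\ of $L^1_{\mathrm{loc}}$ modulo constants) obtained by gluing, over cubes $Q$ with centre $x_Q$, the local pieces $T\chi_{\lambda Q}|_Q+\int_{(\lambda Q)^c}[K(\cdot,y)-K(x_Q,y)]\,dy$, where $\lambda=\lambda(d)>1$ is a fixed dilation factor; the consistency of these pieces, hence the meaning of $T1\in BMO$, is routine given \eqref{classcancel1}--\eqref{classcancel2}.

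\emph{Step 1: $T1,T^*1\in BMO$.} Fix a cube $Q$; from the description above, $T1-c_Q=T\chi_{\lambda Q}+\int_{(\lambda Q)^c}[K(\cdot,y)-K(x_Q,y)]\,dy$ on $Q$ for a suitable constant $c_Q$. The first summand satisfies $\fint_Q|T\chi_{\lambda Q}|\le\lambda^d A$ by \eqref{L1} applied to $\lambda Q$; the second is bounded pointwise on $Q$ by $C_d|K|_\delta$, since \eqref{classcancel1} gives $|K(x,y)-K(x_Q,y)|\le|K|_\delta\,\ell(Q)^\delta\,|x_Q-y|^{-d-\delta}$ for $x\in Q$, $y\in(\lambda Q)^c$, which is integrable over $(\lambda Q)^c$. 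Hence $\fint_Q|T1-c_Q|\le C_d(A+|K|_\delta)$, so $\|T1\|_{BMO}\le C_d(A+|K|_\delta)$; repeating with \eqref{L1adj} and \eqref{classcancel2} in place of \eqref{L1} gives $\|T^*1\|_{BMO}\le C_d(A+|K|_\delta)$.

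\emph{Step 2: weak boundedness.} Let $\varphi,\psi$ be smooth bumps adapted to a cube $Q$ (supported in $Q$, bounded by $1$, Lipschitz with constant $\lesssim\ell(Q)^{-1}$); I would bound $|\langle T\varphi,\psi\rangle|$ by $C_d(A+|K|_\delta)|Q|$. Use the dyadic martingale expansion on $Q$, $\varphi=\bigl(\fint_Q\varphi\bigr)\chi_Q+\sum_{R}\Delta_R\varphi$ over dyadic subcubes $R$ of $Q$, where $\Delta_R\varphi$ is a mean-zero combination of the indicators of the children of $R$, supported in $R$, with $\|\Delta_R\varphi\|_\infty\lesssim\ell(R)/\ell(Q)$. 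Since $\psi$ is supported in $Q$, $|\langle T\chi_Q,\psi\rangle|\le\|\psi\|_\infty\|T\chi_Q\|_{L^1(Q)}\le A|Q|$. For $R$ with $\ell(R)=2^{-k}\ell(Q)$, split $\psi=\psi\chi_{\lambda R}+\psi\chi_{(\lambda R)^c}$: the near part is handled by estimating $\int_{\lambda R}|T\chi_{R'}|$ for each child $R'$ of $R$, using \eqref{L1} on $R'$ itself and \eqref{classicalsize} on $\lambda R\setminus R'$ (the two scales being comparable, the attendant double integral is $\lesssim_d|R'|$), which gives $\lesssim(A+|K|_\delta)\,2^{-k}|R|$; the far part is handled by using $\int\Delta_R\varphi=0$ to insert $-K(x,x_R)$ and then invoking \eqref{classcancel2}, which gives $\lesssim|K|_\delta\,2^{-k}|R|$. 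Summing over $R$ and using that at each generation $k$ the relevant cubes partition $Q$, one obtains $\sum_R|\langle T\Delta_R\varphi,\psi\rangle|\lesssim\sum_{k\ge1}2^{-k}(A+|K|_\delta)|Q|\lesssim(A+|K|_\delta)|Q|$, so $W_T\lesssim_d A+|K|_\delta$.

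\emph{Conclusion and main difficulty.} Plugging Steps 1--2 into the David--Journ\'e theorem gives $\|T\|_{2\to 2}\le C_{\delta,d}(A+|K|_\delta)$, as claimed. (If one prefers not to quote David--Journ\'e: subtract the paraproducts $\Pi_{T1}$ and $\Pi^*_{T^*1}$, which are $L^2$-bounded exactly because $T1,T^*1\in BMO$, reducing to an operator annihilating $1$ under itself and its adjoint; $L^2$-boundedness of that operator follows from Cotlar--Stein almost orthogonality applied to a Littlewood--Paley decomposition, with off-diagonal decay from \eqref{classicalsize}--\eqref{classcancel2} and the diagonal terms controlled by the weak boundedness of Step 2.) I expect the genuinely delicate points to be: (i) pinning down the meaning of $T\chi_V$ and of $T1\in BMO$ with no a priori boundedness --- precisely where the form of \eqref{L1}--\eqref{L1adj} and the $\delta$-SIO structure are used --- and (ii) the martingale estimate of Step 2, where it is essential that the $L^1$ testing bound be available for \emph{every} cube, not only for a fixed one.
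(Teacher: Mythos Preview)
Your proposal is correct and is precisely the ``standard localization argument'' the paper alludes to: the paper does not give a detailed proof of this theorem, only a citation to \cite{Ho} and a remark that it follows from the classical $T(1)$ theorem, which is exactly what your Steps~1--2 accomplish by verifying $T1,T^*1\in BMO$ and the weak boundedness property from the $L^1$ testing hypothesis and the $\delta$-standard kernel bounds. Your discussion of the interpretation of $T\chi_V$ and $T1$ in the absence of a priori $L^2$-boundedness, and your martingale expansion for the WBP, go well beyond what the paper itself records.
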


\begin{remark}
This version of the $T1$ theorem is not as well known as some others but follows, by a standard localization argument, from the classical versions.
\end{remark}

\begin{proposition}\label{mixedtoclassy}
Let $T$ be an operator defined as in \eqref{defmixoperator} whose kernel satisfies the conditions \eqref{size} -- \eqref{adapted1} , then the pair 
\begin{equation*}
\left(K_1(x_1,y_1),K_2(x_2,y_2)\right):= 
\left(K((x_1,\cdot),(y_1,\cdot)),K((\cdot,x_2),(\cdot,y_2))\right)
\end{equation*}
\noindent is a pair of $\delta CZ$-$\delta$- standard kernels satisfying conditions \eqref{defK1} and \eqref{defK2}.
\end{proposition}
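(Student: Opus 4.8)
The plan is to verify that the functions $K_1(x_1,y_1) := K((x_1,\cdot),(y_1,\cdot))$ and $K_2(x_2,y_2) := K((\cdot,x_2),(\cdot,y_2))$, viewed as operators on $\mathcal{C}_0^\infty(\mathbb{R}^m)$ and $\mathcal{C}_0^\infty(\mathbb{R}^n)$ respectively, are $\delta CZ$-valued $\delta$-standard kernels and that they reproduce $T$ in the sense of \eqref{defK1} and \eqref{defK2}. By the symmetry of the hypotheses in the two parameters it suffices to treat $K_1$; the argument for $K_2$ is identical after swapping the roles of the coordinates. Fix $x_1 \neq y_1$ in $\mathbb{R}^n$. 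The first task is to see that $K_1(x_1,y_1)$ is itself a bounded operator on $L^2(\mathbb{R}^m)$, i.e.\ an element of $\delta CZ$; for this I would check that the kernel $(x_2,y_2)\mapsto K((x_1,x_2),(y_1,y_2))$ is a $\mathbb{C}$-valued $\delta$-standard kernel on $\mathbb{R}^m$ with the correct scaling, and that the associated operator maps $L^2(\mathbb{R}^m)\to L^2(\mathbb{R}^m)$ boundedly. The kernel bounds come directly from \eqref{size}, \eqref{mixed4} and \eqref{holder1}: \eqref{size} gives $|K((x_1,x_2),(y_1,y_2))| \le C|x_1-y_1|^{-n}|x_2-y_2|^{-m}$, and \eqref{mixed4} together with its transpose analogue gives the $\delta$-Hölder estimates in the second variable, both carrying the prefactor $|x_1-y_1|^{-n}$. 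The $L^2$-boundedness of $K_1(x_1,y_1)$ as an operator in the $x_2$ variable is precisely where Theorem \ref{1auxT1} enters: one applies it to the $\delta$-SIO on $\mathbb{R}^m$ with kernel $K((x_1,\cdot),(y_1,\cdot))$, and the required $L^1$ testing conditions \eqref{L1}, \eqref{L1adj} are supplied by \eqref{sep1} and \eqref{adapted1} (via Lemma \ref{genadapted1}), after identifying $K((x_1,\cdot),(y_1,\cdot))$ with a suitable $K^j_{f_j,g_j}$ or a limit/average of such — here one takes $f_j,g_j$ to be bump functions concentrated near $x_1$ and $y_1$ and lets them shrink, using that $x_1 \neq y_1$ so disjointness of supports holds in the limit.

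Next I would establish the $\delta CZ$-valued standard kernel estimates in the $x_1,y_1$ variables, namely \eqref{classicalsize}, \eqref{classcancel1}, \eqref{classcancel2} with $B = \delta CZ$. The $\delta CZ$-norm of $K_1(x_1,y_1)$ is $\|K_1(x_1,y_1)\|_{2\to2} + |K_1(x_1,y_1)|_\delta$. The second term is controlled by the kernel estimates above, all of which carry either $|x_1-y_1|^{-n}$ (for the size part) or $|x_1-x_1'|^\delta|x_1-y_1|^{-n-\delta}$ (for the $x_1$-Hölder part): the size-in-$x_1$ bound uses \eqref{size}, \eqref{mixed4}; the Hölder-in-$x_1$ bound uses \eqref{mixed1} together with \eqref{holder3}, \eqref{holder4} (and transposes) to control the mixed second-order differences. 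For the $\|\cdot\|_{2\to2}$ term one reruns the previous paragraph with the extra $|x_1-y_1|^{-n}$ (resp.\ $|x_1-x_1'|^\delta|x_1-y_1|^{-n-\delta}$) factor tracked through: the $L^1$-testing bounds from \eqref{adapted1}/Lemma \ref{genadapted1} come with the constant $C(f_j,g_j)$, and the whole point of \eqref{adapted1} is that these constants scale like $|V|$ times the right geometric factor, which then feeds into the constant $A$ in Theorem \ref{1auxT1}. So $\|K_1(x_1,y_1)\|_{\delta CZ} \lesssim |x_1-y_1|^{-n}$ and the difference $\|K_1(x_1,y_1)-K_1(x_1',y_1)\|_{\delta CZ} \lesssim |x_1-x_1'|^\delta |x_1-y_1|^{-n-\delta}$ for $|x_1-x_1'| < |x_1-y_1|/2$, and similarly in $y_1$.

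Finally, the reproducing formula \eqref{defK1}: for $f_1,g_1\in\mathcal{C}_0^\infty(\mathbb{R}^n)$ with disjoint supports and $f_2,g_2\in\mathcal{C}_0^\infty(\mathbb{R}^m)$, the kernel representation \eqref{defmixoperator} gives $\langle g_1\otimes g_2, T(f_1\otimes f_2)\rangle = \iiiint K(x,y) f_1(y_1)f_2(y_2)g_1(x_1)g_2(x_2)$, and since $\supp f_1 \cap \supp g_1 = \emptyset$ the $x_1,y_1$ integration stays away from the singular set $\{x_1=y_1\}$, so Fubini lets us write this as $\iint g_1(x_1)\big(\iint K((x_1,x_2),(y_1,y_2))g_2(x_2)f_2(y_2)\,dx_2dy_2\big) f_1(y_1)\,dx_1dy_1$, and the inner double integral is exactly $\langle g_2, K_1(x_1,y_1)f_2\rangle$ by definition of $K_1$; this is \eqref{defK1}, and \eqref{defK2} follows symmetrically. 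The main obstacle I anticipate is the bookkeeping in the second and third paragraphs: making the application of Theorem \ref{1auxT1} uniform in $x_1,y_1$ (and in the first-order difference quotient in $x_1$) so that the $L^2 \to L^2$ norms genuinely inherit the $|x_1-y_1|^{-n}$ and $|x_1-x_1'|^\delta|x_1-y_1|^{-n-\delta}$ decay — this requires carefully rescaling the testing cubes $V$ in \eqref{adapted1}/Lemma \ref{genadapted1} and checking that all the mixed Hölder conditions \eqref{holder1}--\eqref{holder4} and \eqref{mixed1}--\eqref{mixed4} combine to give the second-order difference estimate needed for $|K_1(x_1,y_1)-K_1(x_1',y_1)|_\delta$, rather than just a first-order one.
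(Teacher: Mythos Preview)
Your overall architecture matches the paper's: verify the $\delta$-standard kernel estimates for $K_1(x_1,y_1)$ in the second variable using \eqref{size} and the mixed conditions, then obtain the $L^2$-bound via Theorem~\ref{1auxT1} with the testing condition coming from \eqref{sep1} and Lemma~\ref{genadapted1}, and repeat for the first-order differences using \eqref{sep2}, \eqref{sep3}. The reproducing formula via Fubini is exactly as in the paper.

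There is, however, one genuine misstep in how you connect the testing condition to \eqref{sep1}. You propose to identify $K((x_1,\cdot),(y_1,\cdot))$ with a limit of $K^1_{f_1,g_1}$ by letting $f_1,g_1$ be bump functions shrinking to $y_1,x_1$. This does not work with the hypothesis as stated: the bound \eqref{adapted1} only controls $C(f_j,g_j)$ for pairs $(\chi_V,\chi_V)$, $(\chi_V,u_V)$, $(u_V,\chi_V)$ supported on the \emph{same} cube $V$, whereas your shrinking bumps are eventually supported on disjoint cubes near the distinct points $x_1$ and $y_1$, so no control on $C(f_1,g_1)$ is available. The paper's route is both simpler and the correct one: use $j=2$, not $j=1$. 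By definition $K^2_{f_2,g_2}(x_1,y_1)=\langle g_2,K_1(x_1,y_1)f_2\rangle$, so the duality form $\langle g_V,K_1(x_1,y_1)\chi_V\rangle$ is literally $K^2_{\chi_V,g_V}(x_1,y_1)$, and \eqref{sep1} with $j=2$ gives the bound $C(\chi_V,g_V)\,|x_1-y_1|^{-n}$; Lemma~\ref{genadapted1} then yields $C(\chi_V,g_V)\lesssim |V|$. No limiting procedure is needed. The same correction applies to the difference estimates, where one uses \eqref{sep2} and \eqref{sep3} with $j=2$ directly.

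A minor point: your citations of the H\"older conditions are slightly off. For $|K_1(x_1,y_1)|_\delta$ one needs \eqref{size}, \eqref{mixed3}, \eqref{mixed4} (not \eqref{holder1}); for $|K_1(x_1,y_1)-K_1(x_1',y_1)|_\delta$ one needs \eqref{mixed1}, \eqref{holder2}, \eqref{holder4} (not \eqref{holder3}). These are bookkeeping slips rather than structural issues.
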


\begin{proof}
That the pair of kernels satisfy conditions  \eqref{defK1} and \eqref{defK2} can be deduced from \eqref{defmixoperator} and Fubini so we are going to concentrate on proving that $K_1$ is a  $\delta CZ$-$\delta$- standard kernel and by the symmetry of the conditions we will also have that $K_2$ is $\delta CZ$-$\delta$- standard kernel.

Let's remind ourselves that for $K_1$ to be a $\delta CZ$-$\delta$- standard kernel, it needs to satisfy the size condition \eqref{classicalsize} and the cancellation conditions \eqref{classcancel1} and \eqref{classcancel2} where $|\cdot|_B=\Vert\cdot\Vert_{\delta CZ}$. This means that the kernel of $K_1$ (which is $K(x_1,y_1)(x_2,x_2)=$ $K((x_1,x_2),(y_1,y_2))=K(x,y)$  where the variables $x_1$ and $y_1$ are fixed) has to satisfy the aforementioned conditions with $|\cdot|_{B}$ being the absolute value and as an operator to be bounded in $L^2$ as operator. Let's do this step by step

\begin{enumerate}
\item We prove that $\Vert K_1(x_1,y_1)\Vert_{\delta CZ}\leq \frac{C}{|x_1-y_1|^n}.$

\vspace{3mm}

\begin{itemize}
\item It's immediate that $|K_1(x_1,y_1)|_{\delta}\leq\frac{C}{|x_1-y_1|^n}$ by \eqref{size}, \eqref{mixed3} and \eqref{mixed4}.

\vspace{3mm}

\item We prove that $\Vert K_1(x_1,y_1)\Vert_{2\rightarrow 2}\leq \frac{C}{|x_1-y_1|^n}.$

\vspace{2mm}

 The $L^2$ boundedness is going to be a consequence of applying Theorem \ref{1auxT1}, which means that by duality, we need to prove that 

\begin{equation*}
|\langle K_1(x_1,y_1)\chi_V,g_V\rangle|+|\langle g_V,K_1(x_1,y_1)\chi_V\rangle|\leq\frac{C}{|x_1-y_1|^n}|V|
\end{equation*}
\noindent for all $g_V\in L^{\infty}(V)$ such that $\Vert g_V\Vert_{\infty}\leq 1$.



Then by linearity, \eqref{sep1} and lemma \ref{genadapted1}

\begin{align*}|\langle K_1(x_1,y_1)\chi_V,g_V\rangle|+& |\langle g_V,K_1(x_1,y_1)\chi_V\rangle|\leq\\
& \leq \left( C (\chi_V,g_V)+C(g_V,\chi_V)\right)\frac{1}{|x_1-y_1|^n}\leq \frac{C}{|x_1-y_1|^n}|V|.\end{align*}
\end{itemize}

\item We prove that $\Vert K_1(x_1,y_1)-K_1(x_1',y_1)\Vert_{\delta CZ}\leq C \frac{|x_1-x_1'|^{\delta}}{|x_1-y_1|^{n+\delta}}.$

\vspace{3mm}

\begin{itemize}
\item $|K_1(x_1,y_1)-K_1(x_1',y_1)|_{\delta}\leq C \frac{|x_1-x_1'|^{\delta}}{|x_1-y_1|^{n+\delta}}$ by \eqref{mixed1}, \eqref{holder2} and \eqref{holder4}.

\vspace{3mm}

\item $\Vert K_1(x_1,y_1)-K_1(x_1',y_1)\Vert_{2\rightarrow 2}\leq C \frac{|x_1-x_1'|^{\delta}}{|x_1-y_1|^{n+\delta}}$ which is satisfied by reasoning as in the first case using \eqref{sep2} instead of \eqref{sep1}.

\end{itemize}
\vspace{3mm}

\item We prove that $\Vert K_1(x_1,y_1)-K_1(x_1,y_1')\Vert_{\delta CZ}\leq C \frac{|y_1-y_1'|^{\delta}}{|x_1-y_1|^{n+\delta}}.$

\vspace{3mm}

\begin{itemize}
\item $|K_1(x_1,y_1)-K_1(x_1,y_1')|_{\delta}\leq C \frac{|y_1-y_1'|^{\delta}}{|x_1-y_1|^{n+\delta}}$ by \eqref{mixed2}, \eqref{holder3} and \eqref{holder1}.

\vspace{3mm}

\item $\Vert K_1(x_1,y_1)-K_1(x_1,y_1')\Vert_{2\rightarrow 2}\leq C \frac{|y_1-y_1'|^{\delta}}{|x_1-y_1|^{n+\delta}}$ which is satisfied by reasoning as in the first case using \eqref{sep3} instead of \eqref{sep1}.

\end{itemize}
\end{enumerate}
\end{proof}

\begin{proposition}
Let T be an operator defined as in \eqref{defmixoperator} that can be extended to an $L^2$ to $L^2$ bounded bi-parameter operator and whose kernel satisfies \eqref{size}--\eqref{adapted1}. Then $T$ satisfies the bi-parameter WBP \eqref{classWBP} in the classical sense.
\end{proposition}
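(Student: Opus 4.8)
The plan is to verify the estimate \eqref{eqclassWBP}, namely $\Vert\langle \eta_t^{x_i},T^i\xi^{x_i}_t\rangle\Vert_{\delta CZ}\leq C_{\mathcal B}t^{-d_i}$, for $i=1$; the case $i=2$ is symmetric. By the definition of the $\delta CZ$-norm \eqref{CZnorm}, this splits into two pieces: the kernel seminorm $\vert\langle \eta_t^{x_1},T^1\xi^{x_1}_t\rangle\vert_\delta$ and the $L^2\to L^2$ operator norm. For the first piece, recall from \eqref{intermediatekernel} that the kernel of $\langle \eta_t^{x_1},T^1\xi^{x_1}_t\rangle$ is $K^1_{\xi^{x_1}_t,\eta^{x_1}_t}(x_2,y_2)=\langle \eta^{x_1}_t, K_2(x_2,y_2)\xi^{x_1}_t\rangle$, where $K_2(x_2,y_2)=K((\cdot,x_2),(\cdot,y_2))$ is the partial kernel identified in Proposition \ref{mixedtoclassy}. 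The size and Hölder bounds \eqref{size}, \eqref{mixed1}, \eqref{mixed2} (in the $x_2,y_2$ variables, integrated against $\eta^{x_1}_t,\xi^{x_1}_t$) give $\vert K^1_{\xi^{x_1}_t,\eta^{x_1}_t}\vert_\delta\lesssim t^{-n}$ directly, since $\eta,\xi$ range over a fixed bounded set $\mathcal B\subset\mathcal C_0^\infty(\mathbb R^n)$ with uniformly bounded $L^1$ norms and supports, and the $t$-normalization contributes exactly the factor $t^{-n}$.

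The substantive part is the $L^2(\mathbb R^m)\to L^2(\mathbb R^m)$ bound for the one-parameter operator $S:=\langle \eta^{x_1}_t,T^1\xi^{x_1}_t\rangle$. The strategy is to invoke the localized one-parameter $T1$ theorem, Theorem \ref{1auxT1}: since $S$ is a $\delta$-SIO on $\mathbb R^m$ (its kernel is a $\mathbb C$-$\delta$-standard kernel by the previous paragraph), it suffices to verify the testing conditions $\Vert S\chi_V\Vert_{L^1(V)}\leq A\,t^{-n}\vert V\vert$ and $\Vert S^*\chi_V\Vert_{L^1(V)}\leq A\,t^{-n}\vert V\vert$ for every cube $V\subset\mathbb R^m$, with $A$ depending only on $\mathcal B$. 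By duality, $\Vert S\chi_V\Vert_{L^1(V)}=\sup\{\vert\langle S\chi_V,g_V\rangle\vert:\ \Vert g_V\Vert_\infty\leq 1\}$, and
\begin{equation*}
\langle S\chi_V,g_V\rangle=\langle \eta^{x_1}_t\otimes g_V,\ T(\xi^{x_1}_t\otimes\chi_V)\rangle.
\end{equation*}
Now I would decompose $\xi^{x_1}_t=\bar\xi\,\chi_Q+(\xi^{x_1}_t-\bar\xi\,\chi_Q)$ where $Q$ is a cube containing $\supp\xi^{x_1}_t$ (a dilate of a fixed cube, so $\vert Q\vert\sim t^n$) and $\bar\xi=\fint_Q\xi^{x_1}_t$; the second term is a zero-mean $Q$-adapted function up to the normalizing constant $t^{-n}$, and similarly split $\eta^{x_1}_t$. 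This reduces $\langle S\chi_V,g_V\rangle$ to a sum of four pairings of the type $\langle T(a\otimes\chi_V),b\otimes g_V\rangle$ with $a\in\{\chi_Q, \text{zero-mean }Q\text{-adapted}\}$ and $b\in\{\chi_Q,\text{zero-mean }Q\text{-adapted}\}$, each carrying an explicit power of $t^{-n}$ from the normalizations.

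Each of these four pairings is then controlled by the mixed-type hypotheses: the $\chi_Q\otimes\chi_V$ against $\chi_Q\otimes g_V$ term uses $L^2$-boundedness of $T$ together with Lemma \ref{genadapted1} and the separated conditions \eqref{sep1}--\eqref{adapted1} (or alternatively mixed WBP \eqref{mixedwbp} after splitting $g_V$ into its mean and a zero-mean part as in that lemma); the terms with one zero-mean adapted factor use the diagonal BMO conditions \eqref{diagbmo1}--\eqref{diagbmo4} combined with Lemma \ref{genadapted1} to absorb the non-adapted $g_V$; and the term with two zero-mean adapted factors is handled directly from the Hölder kernel estimate \eqref{holder1} (exploiting the double cancellation in the $y_1$ and $x_1$ variables) together with \eqref{sep2}, \eqref{sep3} in the $\mathbb R^m$ variables. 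Collecting the bounds yields $\vert\langle S\chi_V,g_V\rangle\vert\lesssim_{\mathcal B}t^{-n}\vert V\vert$, and the adjoint testing condition follows the same way using $T^*$ (equivalently $\widetilde T$ and the remark after Definition \ref{deltasio}), which satisfies the same class of hypotheses. Theorem \ref{1auxT1} then gives $\Vert S\Vert_{2\to2}\lesssim_{\mathcal B}(t^{-n}+\vert K^1\vert_\delta)\lesssim_{\mathcal B}t^{-n}$, completing the bound on $\Vert S\Vert_{\delta CZ}$. The main obstacle I anticipate is the careful bookkeeping in the splitting step: ensuring the $t$-dependence of the normalizing constants tracks correctly through the adapted-function decompositions so that every one of the four pieces ends with exactly $t^{-n}\vert V\vert$ and no spurious powers of $t$ or $\vert Q\vert$ survive, and confirming that the "$Q$-adapted with zero mean" normalization required by \eqref{adapted1} and \eqref{diagbmo1}--\eqref{diagbmo4} is met after rescaling a fixed bump from $\mathcal B$.
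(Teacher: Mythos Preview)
Your argument has a genuine gap in the kernel-seminorm step. You claim that $\vert K^1_{\xi^{x_1}_t,\eta^{x_1}_t}\vert_\delta\lesssim t^{-n}$ follows by integrating the pointwise estimates \eqref{size}, \eqref{mixed1}, \eqref{mixed2} against $\eta^{x_1}_t(z_1)\xi^{x_1}_t(w_1)$ using their $L^1$ norms. But every one of those pointwise bounds carries a factor $\vert z_1-w_1\vert^{-n}$, which is not locally integrable over $\mathbb R^n\times\mathbb R^n$ near the diagonal $z_1=w_1$; since $\eta$ and $\xi$ need not have disjoint supports, the integral you describe simply diverges. (You also cited \eqref{mixed1}, \eqref{mixed2}, which are H\"older conditions in the \emph{first} variables; for the $x_2,y_2$ regularity of $K^1$ one would need \eqref{mixed3}, \eqref{mixed4}---but these carry the same non-integrable factor.) The paper's fix is to use not the pointwise kernel bounds but the \emph{operator} bounds established in Proposition~\ref{mixedtoclassy}: one writes $K^1_{\xi^{x_1}_t,\eta^{x_1}_t}(x_2,y_2)=\langle\eta^{x_1}_t,K_2(x_2,y_2)\xi^{x_1}_t\rangle$ and estimates by $\Vert K_2(x_2,y_2)\Vert_{2\to2}\Vert\xi^{x_1}_t\Vert_2\Vert\eta^{x_1}_t\Vert_2\lesssim\vert x_2-y_2\vert^{-m}\,t^{-n}$, and similarly for the H\"older differences using the already proved bounds on $\Vert K_2(x_2,y_2)-K_2(x_2',y_2)\Vert_{2\to2}$ and $\Vert K_2(x_2,y_2)-K_2(x_2,y_2')\Vert_{2\to2}$.

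For the $L^2$ bound of $S=\langle\eta^{x_1}_t,T^1\xi^{x_1}_t\rangle$, your route via Theorem~\ref{1auxT1} is the same as the paper's, but the verification of the testing condition is far more laborious than necessary. The paper bounds the pairing $\langle\eta^{x_1}_t\otimes g_V,\,T(\xi^{x_1}_t\otimes\chi_V)\rangle$ in one line by the assumed $L^2$-boundedness of $T$: it is at most $\Vert T\Vert_{2\to2}\Vert\xi^{x_1}_t\Vert_2\Vert\chi_V\Vert_2\Vert\eta^{x_1}_t\Vert_2\Vert g_V\Vert_2\lesssim t^{-n}\vert V\vert$. Your four-term decomposition via $\chi_Q$ and zero-mean adapted pieces, appealing to \eqref{mixedwbp}--\eqref{diagbmo4}, is circuitous; indeed the paper remarks immediately after the proof that those conditions are themselves consequences of the $L^2$-boundedness and were not included as hypotheses for that reason.
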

\begin{proof}

We are going to assume without loss of generality that $i=1$ since by symmetry of the conditions the other case is proved in the same manner. Let's fix a bounded subset $\mathcal{B}$ of $\mathcal{C}_0^{\infty}(\mathbb{R}^n)$. Then there exists a constant $C_{\mathcal{B}}$ such that $\Vert f\Vert_2\leq C_{\mathcal{B}}$ $\forall f\in\mathcal{B}$.

\vspace{2mm}

By definition, we need to prove that $\Vert\langle\eta_t^{x_1},T^1\xi_t^{x_1}\rangle\Vert_{\delta CZ}\leq\frac{C_{\mathcal{B}}}{t^n}$. Remember that in \eqref{intermediatekernel} we determined that

$$K^1_{\xi_t^{x_1},\eta_t^{x_1}}(x_2,y_2)=\langle\eta_t^{x_1},K_2(x_2,y_2)\xi_t^{x_1}\rangle=\langle\eta_t^{x_1}, T^1 \xi_t^{x_1}\rangle(x_2,y_2)$$

\vspace{2mm} 

First of all we are going to prove that $|K^1_{\xi_t^{x_1},\eta_t^{x_1}}|_{\delta}\leq\frac{\tilde{C}_{\mathcal{B}}}{t^n}$ by using the proof of Proposition \ref{mixedtoclassy} where we determined the $L^2$ boundedness of $K_1(x_1,y_1)$ and $K_2(x_2,y_2)$ as well as their H\"older versions.

\begin{itemize}

\item We prove that $|\langle \eta_t^{x_1}T^1\xi_t^{x_1}\rangle(x_2,y_2)|\leq \frac{C}{|x_2-y_2|^m}\frac{C_{\mathcal{B}}}{t^n}:$

\begin{align*}
|\langle \eta_t^{x_1},T^1\xi_t^{x_1}\rangle(x_2,y_2)|&=|\langle \eta_t^{x_1},K_2(x_2,y_2)\xi_t^{x_1}\rangle|\\
&\leq\Vert K_2(x_2,y_2)\Vert_{2\rightarrow 2} \ \Vert\xi_t^{x_1}\Vert_2 \ \Vert \eta_t^{x_1}\Vert_2\\
&\leq \frac{C}{|x_2-y_2|^m} \cdot \frac{C_{\mathcal{B}}}{t^n}
\end{align*}

\item Similarly we prove that $|\langle \eta_t^{x_1}T^1\xi_t^{x_1}\rangle(x_2,y_2) - \langle \eta_t^{x_1}T^1\xi_t^{x_1}\rangle(x_2',y_2)|$ 

\noindent $\leq C \frac{|x_2-x_2'|^{\delta}}{|x_2-y_2|^{m+\delta}}\frac{C_{\mathcal{B}}}{t^n}:$

\begin{align*}
|\langle \eta_t^{x_1},T^1\xi_t^{x_1}\rangle(x_2,y_2)-|\langle \eta_t^{x_1},T^1\xi_t^{x_1}\rangle(x_2',y_2)|&=|\langle \eta_t^{x_1},\left(K_2(x_2,y_2)-K_2(x_2',y_2)\right)\xi_t^{x_1}\rangle|\\
&\leq\Vert K_2(x_2,y_2)-K_2(x_2',y_2)\Vert_{2\rightarrow 2} \ \Vert\xi_t^{x_1}\Vert_2 \ \Vert \eta_t^{x_1}\Vert_2\\
&\leq C \frac{|x_2-x_2'|}{|x_2-y_2|^{m+\delta}} \cdot \frac{C_{\mathcal{B}}}{t^n}
\end{align*}

\item By a symmetric argument we have $|\langle \eta_t^{x_1}T^1\xi_t^{x_1}\rangle(x_2,y_2) - \langle \eta_t^{x_1}T^1\xi_t^{x_1}\rangle(x_2,y_2')|\leq C \frac{|y_2-y_2'|^{\delta}}{|x_2-y_2|^{m+\delta}}\frac{C_{\mathcal{B}}}{t^n}.$

\end{itemize}

Now we are left to prove that $\Vert K^1_{\xi_t^{x_1},\eta_t^{x_1}}\Vert_{2\rightarrow 2}\leq\frac{\tilde{C}_{\mathcal{B}}}{t^n}$.

Since we have proven that $K^1_{\xi_t^{x_1},\eta_t^{x_1}}$ has a $\delta$-standard kernel we are in the conditions of using Theorem \ref{1auxT1} to determine the $L^2$ boundedness bound, i.e., by duality we are reduced to prove that

\begin{align*}
&\vert\langle\langle \eta_t^{x_1},T^1\xi_t^{x_1}\rangle \chi_V, f_V\rangle\vert+\vert\langle f_V,\langle \eta_t^{x_1},T^1\xi_t^{x_1}\rangle \chi_V\rangle\vert=\\
&\vert\langle \eta_t^{x_1}\otimes \chi_V, T\xi_t^{x_1}\otimes f_V\rangle\vert+\vert\langle \eta_t^{x_1}\otimes f_V, T\xi_t^{x_1}\otimes \chi_V\rangle\vert\leq \frac{\tilde{C}_{\mathcal{B}}}{t^n}|V|
\end{align*}

\noindent for all cubes $V$ in $\mathbb{R}^m$ and all $f_V\in L^{\infty}(V)$ such that $\Vert f_V\Vert_{\infty}\leq 1$ which is satisfied by the $L^2$ to $L^2$ boundedness of the operator which ends our proof.
\end{proof}

\begin{remark}
We haven't included conditions \eqref{mixedwbp} -- \eqref{diagbmo4}  in the statement of the proof because they are a consequence of the $L^2$ to $L^2$ boundedness of the operator.
\end{remark}

On \cite{HyM2}, it was stated that if an operator $T$ defined as in \eqref{defmixoperator} satisfied conditions \eqref{size}--\eqref{adapted1}, \eqref{mixedwbp}--\eqref{diagbmo4} and $T1$, $T^*1$, $\tilde{T}1$ and $\tilde{T}^*1$ lie in BMO then the operator T could be extended to an $L^2$ to $L^2$ bounded operator.

It was also stated that if $T$ is a bi-parameter $\delta-SIO$ that can be extended to an $L^2$ to $L^2$ bounded operator then $T1$ and $T^*1$ lie in BMO. If in addition $\tilde{T}$ can be extended to an $L^2$ to $L^2$ bounded operator $\tilde{T}1$ and $\tilde{T}^*1$ lie in BMO also.

It was missing, and we have just proven, it's that if $T$ is an operator defined as in \eqref{defmixoperator} that satisfies conditions \eqref{size}--\eqref{adapted1} and can be extended to an $L^2$ to $L^2$ bounded operator then $T$ is a bi-parameter $\delta-SIO$.

As a consequence we can answer the following open question left in \cite{PV} and \cite{HyM2}.

\begin{corollary}
Let T be an operator defined as in \eqref{defmixoperator} that can be extended to an $L^2$ to $L^2$ bounded bi-parameter operator and whose kernel satisfies \eqref{size}--\eqref{adapted1}. Then $T1$ and $T^*1$ lie in BMO and it has the WBP in the classical sense.
\end{corollary}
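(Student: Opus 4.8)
The plan is to obtain the corollary by concatenating the three facts established in this section, with essentially no new computation. First I would apply Proposition \ref{mixedtoclassy} to $T$: since $T$ has the kernel representation \eqref{defmixoperator} with a kernel $K$ satisfying \eqref{size}--\eqref{adapted1}, that proposition produces the pair
$$\big(K_1(x_1,y_1),K_2(x_2,y_2)\big)=\big(K((x_1,\cdot),(y_1,\cdot)),\,K((\cdot,x_2),(\cdot,y_2))\big),$$
shows that it consists of $\delta CZ$-$\delta$-standard kernels, and verifies \eqref{defK1}--\eqref{defK2}. Hence $T$ is a bi-parameter $\delta$-SIO on $\mathbb{R}^n\times\mathbb{R}^m$ in the sense of Definition \ref{deltasio}.

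Next I would invoke the Journ\'e-type result recalled above (from \cite{J}, see also \cite{HyM2}): an $L^2$-bounded bi-parameter $\delta$-SIO $S$ automatically satisfies $S1,\,S^*1\in BMO$. By hypothesis $T$ extends to an $L^2$-bounded bi-parameter operator, and by the previous step it is a bi-parameter $\delta$-SIO, so this applies directly and yields $T1\in BMO$; applying the same statement to the transpose $T^*$ --- which is again an $L^2$-bounded bi-parameter $\delta$-SIO, with kernels obtained from $K_1,K_2$ as described in Definition \ref{deltasio} and with $\Vert T^*\Vert_{2\to 2}=\Vert T\Vert_{2\to 2}$ --- gives $T^*1\in BMO$.

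Finally, for the last assertion I would simply quote the Proposition established immediately before this corollary: an operator of the form \eqref{defmixoperator} whose kernel satisfies \eqref{size}--\eqref{adapted1} and which extends to an $L^2$-bounded bi-parameter operator satisfies the bi-parameter WBP \eqref{classWBP} in the classical sense. Combining the three steps completes the argument.

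The one point that deserves care --- and which is really the only obstacle, a bookkeeping one --- is making sure the hypotheses line up: that the class of bi-parameter $\delta$-SIO's to which the quoted BMO theorem applies is precisely the one delivered by Proposition \ref{mixedtoclassy}, and that the conclusion $T1,\,T^*1\in BMO$ does not secretly require a priori $L^2$-boundedness of the partial transpose $\tilde T$. It does not: that extra hypothesis enters only for the statements about $\tilde T1$ and $\tilde T^*1$, which the corollary does not claim. Once this is checked, the result is immediate.
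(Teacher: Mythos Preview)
Your proposal is correct and follows exactly the route the paper takes: the corollary is obtained by combining Proposition~\ref{mixedtoclassy} (to see that $T$ is a bi-parameter $\delta$-SIO), the subsequent proposition (to get the classical WBP), and the cited result from \cite{J}, \cite{HyM2} that an $L^2$-bounded bi-parameter $\delta$-SIO has $T1,\,T^*1\in BMO$. Your closing remark about the partial transpose $\tilde T$ is also on point and matches the paper's distinction between the $T1,T^*1$ and $\tilde T1,\tilde T^*1$ statements.
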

 

\section{Classical conditions imply mixed type conditions \label{s5}}

We have proven that the mixed type conditions imply the classical conditions, so in this section we are going to proceed to prove the converse direction, i.e., that the classical conditions imply the mixed type conditions.

\begin{theorem}
Let $T$ be a bi-parameter $\delta$-SIO as defined in Definition \ref{deltasio} satisfying the WBP \eqref{classWBP}, then $T$ satisfies conditions \eqref{size}--\eqref{adapted1}. If in addition the operator is $L^2$ to $L^2$ bounded, then it also satisfies conditions \eqref{mixedwbp}--\eqref{diagbmo4}.
\end{theorem}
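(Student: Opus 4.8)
The proof splits along the two assertions. For the first one I would argue essentially as in Proposition~\ref{mixedtoclassy} but in the reverse direction, using only the $\delta CZ$-valued $\delta$-standard kernels $K_1,K_2$ provided by Definition~\ref{deltasio}; in fact the weak boundedness property \eqref{classWBP} is not needed here and is kept in the statement only to name the full classical class. The first step is to exhibit the kernel of the Calder\'on--Zygmund structure \eqref{defmixoperator}: for $x_1\ne y_1$, $x_2\ne y_2$ define $K(x,y)$ to be the value at $(x_2,y_2)$ of the $\delta$-standard kernel of the $\delta$-CZ operator $K_1(x_1,y_1)$ on $\mathbb{R}^m$. Expanding $\langle g_2,K_1(x_1,y_1)f_2\rangle$ and $\langle g_1,K_2(x_2,y_2)f_1\rangle$ through their own kernel representations and applying Fubini in \eqref{defK1}--\eqref{defK2} shows that this $K$ equally is the value at $(x_1,y_1)$ of the kernel of $K_2(x_2,y_2)$, and that $\langle Tf,g\rangle=\iint K(x,y)f(y)g(x)\,dx\,dy$ whenever $\supp f_i\cap\supp g_i=\emptyset$ for $i=1,2$.

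The kernel estimates are then verified family by family, each one reducing to the $\delta CZ$-bounds on $K_1$ (that is, \eqref{classicalsize}--\eqref{classcancel2} with $B=\delta CZ$) together with the size and cancellation of scalar internal kernels. Condition \eqref{size} is immediate, $|K(x,y)|\le|K_1(x_1,y_1)|_\delta|x_2-y_2|^{-m}\le\Vert K_1(x_1,y_1)\Vert_{\delta CZ}|x_2-y_2|^{-m}\le C|x_1-y_1|^{-n}|x_2-y_2|^{-m}$. For \eqref{mixed1}--\eqref{mixed4} one has single differences: $K(x,y)-K((x_1',x_2),y)$ is the internal kernel of $K_1(x_1,y_1)-K_1(x_1',y_1)$ at $(x_2,y_2)$, whose $\delta CZ$-norm is $\le C|x_1-x_1'|^\delta|x_1-y_1|^{-n-\delta}$ by \eqref{classcancel1}, giving \eqref{mixed1}; \eqref{mixed2} is the same via \eqref{classcancel2}, while \eqref{mixed3}, \eqref{mixed4} come from differencing the internal kernel of $K_1(x_1,y_1)$ in its first, resp. second, slot. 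The H\"older conditions \eqref{holder1}--\eqref{holder4} are the one delicate point: in each, the second difference factors as a difference of differences, e.g.
\[
K(x,y)-K(x,(y_1,y_2'))-K(x,(y_1',y_2))+K(x,y')=\ell(x_2,y_2)-\ell(x_2,y_2'),
\]
where $\ell$ is the internal kernel of $L:=K_1(x_1,y_1)-K_1(x_1,y_1')$, whence $|\ell(x_2,y_2)-\ell(x_2,y_2')|\le|L|_\delta|y_2-y_2'|^\delta|x_2-y_2|^{-m-\delta}$ and $|L|_\delta\le\Vert L\Vert_{\delta CZ}\le C|y_1-y_1'|^\delta|x_1-y_1|^{-n-\delta}$ by \eqref{classcancel2}. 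The other three follow by the same factorization, choosing at the outer step \eqref{classcancel1} or \eqref{classcancel2} in the $x_1$- or $y_1$-slot of $K_1$ and at the inner step the matching cancellation of $\ell$ in the $x_2$- or $y_2$-slot; the side conditions $|x_i-x_i'|\le|x_i-y_i|/2$, $|y_i-y_i'|\le|x_i-y_i|/2$ are exactly what each use of \eqref{classcancel1}--\eqref{classcancel2} requires.

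For \eqref{sep1}--\eqref{adapted1} I would use \eqref{intermediatekernel}: since $K^1_{f_1,g_1}(x_2,y_2)=\langle g_1,K_2(x_2,y_2)f_1\rangle$ and $K^2_{f_2,g_2}(x_1,y_1)=\langle g_2,K_1(x_1,y_1)f_2\rangle$, bounding these by the corresponding operator norm times $\Vert f_j\Vert_2\Vert g_j\Vert_2$ and invoking the $\delta CZ$-bounds on $K_1,K_2$ gives \eqref{sep1}--\eqref{sep3} with $C(f_j,g_j)=C\Vert f_j\Vert_2\Vert g_j\Vert_2$; as $\Vert\chi_V\Vert_2=|V|^{1/2}$ and $\Vert u_V\Vert_2\le|V|^{1/2}$ for any $V$-adapted $u_V$, this yields \eqref{adapted1}. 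For the second assertion, if $T$ extends to a bounded map on $L^2$, then \eqref{mixedwbp} and \eqref{diagbmo1}--\eqref{diagbmo4} are immediate from Cauchy--Schwarz, every one of $\chi_Q,\chi_V,a_Q,b_V$ having $L^2$-norm at most $|Q|^{1/2}$ or $|V|^{1/2}$, so each pairing is dominated by $\Vert T\Vert_{2\to2}|Q||V|$. The only genuine obstacle is the bookkeeping in the H\"older step --- keeping straight which slot of $K_1(x_1,y_1)$ and of its internal kernel is being differenced, hence which of \eqref{classcancel1}--\eqref{classcancel2} applies --- but past the factorization of the second difference no new idea is needed.
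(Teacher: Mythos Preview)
Your proposal is correct and follows essentially the same route as the paper's proof: \eqref{size}--\eqref{mixed4} from the pointwise $\delta$-standard estimates of the $\delta CZ$-valued kernels, \eqref{sep1}--\eqref{adapted1} from the $L^2$-boundedness part of the $\delta CZ$-norm with $C(f_j,g_j)\le C\Vert f_j\Vert_2\Vert g_j\Vert_2$, and \eqref{mixedwbp}--\eqref{diagbmo4} from the global $L^2$-boundedness of $T$. The paper only sketches these points in a couple of sentences, so your version is simply a fully unpacked form of the same argument; your remark that the classical WBP \eqref{classWBP} is not actually invoked is also consistent with the paper's proof.
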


\begin{proof}
That the operator satisfies \eqref{size}--\eqref{adapted1} can be deduced directly from the definition of $\delta$-Calder\'on-Zygmund kernel. The size condition, H\"older conditions and mixed H\"older and size conditions (\eqref{size}--\eqref{mixed4}) are consequence of the pointwise conditions of the kernel while the separated H\"older and size conditions (\eqref{sep1}--\eqref{adapted1}) are consequence of the $L^2$ boundedness conditions of the kernels with constant $C(f_j,g_j)\leq C \Vert f_j\Vert_2\cdot \Vert g_j\Vert_2$ for $j=1,2$.

Finally, that the operator satisfies conditions \eqref{mixedwbp}--\eqref{diagbmo4} is a trivial consequence of the $L^2$ boundedness of the operator.
\end{proof}

\begin{remark}

It is worth noticing that  Pott and Villarroya original conditions differs slightly from the mixed type conditions that we have used in this paper. While we have used characteristic function and cube adapted functions in conditions \eqref{adapted1}--\eqref{diagbmo4}, they used instead some bump functions which has not necessarily compact support. 

That the above result can also be proven for the \cite{PV} conditions it is left for the reader. We would like to point out that in the uniparametric setting, we can indiscriminately test our operator on characteristic functions or in bump functions (c.f. \cite{G}). If we add that observation with the fact that we have used uniparametric results along the proofs of this paper, one can get an idea of the blueprint for proving such results for the \cite{PV} conditions.

\end{remark}

\end{document}